\title[Physics-informed Gaussian Processes as Linear Model Predictive Controller]{Physics-informed Gaussian Processes as Linear Model Predictive Controller}
\pgfplotsset{
  /pgfplots/confidence box/.style 2 args={
    legend image code/.code={
        \definecolor{steelblue31119180}{RGB}{31,119,180}
        \draw[steelblue31119180,no markers, fill=steelblue31119180, opacity=0.5]
        plot coordinates {
        (-0.1cm,-0.1cm)
        (-0.1cm,0.2cm)
        (0.5cm,0.2cm)
        (0.5cm,-0.1cm)
        (-0.1cm,-0.1cm)
      }
      node[rectangle]{};
    }
  }
}
\pgfplotsset{compat=newest}
\DeclareAcronym{aic}{
    short = AIC,
    long = Akaike Information Criterion 
}
\DeclareAcronym{aicc}{
    short = AICc,
    long = Akaike Information Criterion correction 
}
\DeclareAcronym{bfgs}{
    short = BFGS,
    long = Broyden-Fletcher-Goldfarb-Shanno algorithm
}
\DeclareAcronym{bic}{
    short = BIC,
    long = Bayesian Information Criterion 
}
\DeclareAcronym{cks}{
    short = CKS,
    long = Compositional Kernel Search 
}
\DeclareAcronym{dl}{
    short = DL,
    long = Deep Learning 
}
\DeclareAcronym{gp}{
    short = GP,
    long = Gaussian Process,
    long-plural-form = Gaussian Processes 
}
\DeclareAcronym{hmc}{
    short = HMC, 
    long = Hamiltonian Monte Carlo
}
\DeclareAcronym{kl}{
    short = KL, 
    long = Kullback-Leibler
}
\DeclareAcronym{ks}{
    short = KS,
    long = Kernel Search 
}
\DeclareAcronym{lfm}{
    short = LFM, 
    long = Latent Force Model 
}
\DeclareAcronym{lodegp}{
    short = LODE-GP, 
    long = Linear Ordinary Differential Equation Gaussian Process,
    long-plural-form = Linear Ordinary Differential Equation Gaussian Processes
}
\DeclareAcronym{lti}{
    short = LTI, 
    long = Linear Time Invariant 
}
\DeclareAcronym{map}{
    short = MAP, 
    long = Maximum A Posteriori 
}
\DeclareAcronym{mc}{
    short = MC, 
    long = Monte Carlo 
}
\DeclareAcronym{mcmc}{
    short = MCMC, 
    long = Markov Chain Monte Carlo 
}
\DeclareAcronym{mll}{
    short = MLL, 
    long = Marginal Log Likelihood 
}
\DeclareAcronym{ml}{
    short = ML, 
    long = Machine Learning
}
\DeclareAcronym{mpc}{
    short = MPC, 
    long = Model Predictive Control 
}
\DeclareAcronym{nn}{
    short = NN, 
    long = Neural Network 
}
\DeclareAcronym{nuts}{
    short = NUTS, 
    long = No U-Turn Sampler 
}
\DeclareAcronym{ode}{
    short = ODE, 
    long = Ordinary Differential Equation 
}
\DeclareAcronym{rmse}{
    short = rmse, 
    long = Root Mean Squared Error 
}
\DeclareAcronym{se}{
    short = SE, 
    long = Squared Exponential 
}
\DeclareAcronym{sgd}{
    short = SGD, 
    long = Stochastic Gradient Descent 
}
\DeclareAcronym{skc}{
    short = SKC, 
    long = Structured Kernel Composition 
}
\DeclareAcronym{snf}{
    short = SNF, 
    long = Smith Normal Form 
}
\DeclareAcronym{rkhs}{
    short = RKHS, 
    long = Reproducing Kernel Hilbert Space 
}
\begin{document}
\maketitle
\begin{abstract}%
We introduce a novel algorithm for controlling linear time invariant systems in a tracking problem.
The controller is based on a Gaussian Process (GP) whose realizations satisfy a system of linear ordinary differential equations with constant coefficients.
Control inputs for tracking are determined by conditioning the prior GP on the setpoints, i.e.\ control as inference. 
The resulting Model Predictive Control scheme incorporates pointwise soft constraints by introducing virtual setpoints to the posterior Gaussian process.
We show theoretically that our controller satisfies open-loop stability for the optimal control problem by leveraging general results from Bayesian inference and demonstrate this result in a numerical example.
\end{abstract}
\begin{keywords}%
  Linear Model Predictive Control, Constrained Gaussian Processes, Control as Inference
\end{keywords}

\section{Introduction}

Controlling industrial applications requires precise modeling and good control algorithms, which is often addressed via \ac{mpc} \citep{tebbe2023holistic, rawlings2017model}.
\ac{mpc} consists of a predictive model and a control strategy.
The predictive model simulates the future system behavior for given control inputs and the optimization strategy produces the optimal input with respect to the given objective function and constraints.

Predictive models are usually \emph{first principle based} and/or \emph{data-driven}.
For example, \acp{gp} have emerged as a commonly applied data-driven method due to their excellent handling of both few datapoints and uncertainty quantification \citep{berkenkamp2015safe, hewing2018cautious, maiworm2021online}\footnote{For a comparison of \ac{gp}-based approaches to Physics-informed Neural Networks see e.g.\ \cite{harkonen2023gaussian}}.
The objective function may be of economic type \citep{bradford2018economic} or quadratic type \citep{qin2003survey}.
Linear \ac{mpc} for tracking defines the special case where the predictive model is linear and the objective is the squared Euclidean distance of the state to a desired setpoint and therefore quadratic \citep{limon2008mpc}.

The resulting optimization problem is a Quadratic Program (QP) whose solution is easily obtained.
A problem of this constrained optimization problem is, that it might be infeasible due to the initial point not satisfying the constraints \citep{kerrigan2000soft, krupa2024model}. 
A remedy to this is soft constrained \ac{mpc} where the optimization problem has no hard state and control constraints, but contains these constraints in its objective function.
Several works addressed this research path, including \cite{zeilinger2014soft, wabersich2021soft, gracia2024implementation}.
In this work we present a novel approach for such problems with input and state constraints.
We expand the recently introduced class of \acp{lodegp} \citep{besginow2022constraining}, a class of \acp{gp} strictly satisfying given linear \ac{ode} systems to an \ac{mpc} scheme. 
We solve the optimal control problem by using the current state and constraints as training data for the \ac{lodegp} and obtaining the control law directly from its posterior predictive distribution.
By using smooth kernel functions in our \ac{gp}, our method also produces smooth control functions, but other choices of kernels would be possible \citep{duvenaud2014automatic}. 
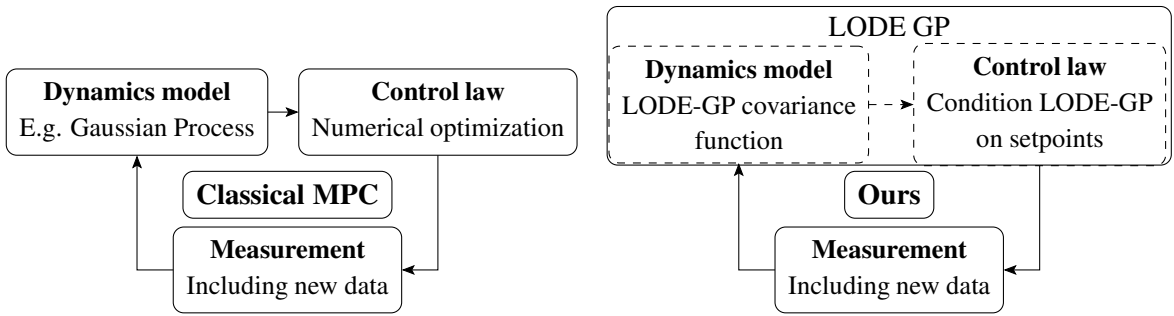
\begin{figure*}[t]
    \centering
    \definecolor{steelblue31119180}{RGB}{31,119,180}
\begin{tikzpicture}
\node[draw, rounded corners, inner sep=5pt, outer sep=0pt, align=center] (Dynamics_MPC) at (0.0,5.0){\small\textbf{Dynamics model} \\ \small E.g. Gaussian Process};
\node[draw, rounded corners, inner sep=5pt, outer sep=0pt, align=center] (Optimization_MPC) at (4.0,5.0){\small\textbf{Control law} \\ \small Numerical optimization};
\node[draw, rounded corners, inner sep=5pt, outer sep=0pt, align=center] (Measurement_MPC) at (2.0,2.9){\small \textbf{Measurement} \\ \small Including new data};
\node[draw, rounded corners, inner sep=5pt, outer sep=0pt, align=center](Classical MPC) at (2.0, 3.9){\textbf{Classical MPC}};
      
\node[draw, rounded corners, inner sep=5pt, outer sep=0pt, align=center, minimum width=7.5cm, minimum height=2.11cm] (LODE_GP) at (10.0,5.35){};
\node[anchor=north] at (LODE_GP.north) {LODE GP};
\node[draw, dashed, rounded corners, inner sep=5pt, outer sep=0pt, align=center] (Dynamics_GP) at (8.0,5.1){\small \textbf{Dynamics model} \\ \small LODE-GP covariance\\  \small function};
\node[draw, dashed, rounded corners, inner sep=5pt, outer sep=0pt, align=center] (Optimization_GP) at (12.0,5.1){\small \textbf{Control law} \\ \small Condition LODE-GP\\ \small on setpoints};
\node[draw, rounded corners, inner sep=5pt, outer sep=0pt, align=center] (Measurement_GP) at (10.0,2.9){\small \textbf{Measurement} \\ \small Including new data};
\node[draw, rounded corners, inner sep=5pt, outer sep=0pt, align=center](Classical MPC) at (10.0, 3.9){\textbf{Ours}};

\draw[arrows = {-Stealth[length=5pt, inset=1pt]}] (Dynamics_MPC) -- (Optimization_MPC);
\draw[arrows = {-Stealth[length=5pt, inset=1pt]}] (Optimization_MPC) |- (Measurement_MPC);
\draw[arrows = {-Stealth[length=5pt, inset=1pt]}] (Measurement_MPC) -| (Dynamics_MPC);

\draw[dashed, arrows = {-Stealth[length=5pt, inset=1pt]}] (Dynamics_GP) -- (Optimization_GP);
\draw[arrows = {-Stealth[length=5pt, inset=1pt]}] (Optimization_GP) |- (Measurement_GP);
\draw[arrows = {-Stealth[length=5pt, inset=1pt]}] (Measurement_GP) -| (Dynamics_GP);

\end{tikzpicture}
    \caption{Similarities between classical \ac{mpc} and our proposed method.
    While in classical \ac{mpc} (Left) a \ac{gp} or different surrogate model is only used for the dynamics, our approach (Right) applies also the synthesis of the control law \emph{directly in the \ac{gp}}.}
    \label{fig:figure_1}
\end{figure*}
Our approach differs from classical tracking \ac{mpc} by considering \emph{the union of dynamics and control law in one model}. 
This reduces a complex control task to simple posterior inference of a \ac{gp}, as we illustrate in Figure~\ref{fig:figure_1}. This approach, also known as Control as Inference (CAI), has already been used in stochastic optimal control problems and reinforcement learning \citep{levine2018reinforcement}.
Our approach is furthermore based on the behavioral approach to control \citep{willems1997introduction}, which does not necessarily distinguish inputs, state, and outputs and instead combines them as one matrix.
This allows application of computer algebra, with the potential of mixing inputs, states and outputs through base-change matrices.
This point of view allows to apply computer algebra \citep{oberst1990multidimensional,pommaret1999algebraic,zerz2000topics,chyzak2005effective,lange2013thomas,lange2020thomas}, which was also the motivation of the \ac{lodegp}.
In this paper, we endow the behavior, i.e.\ the set of admissible trajectories, with a probability distribution via the \acp{lodegp} and use the marginalization of this probability distribution for control.
This probabilistic behavioral approach allows to restrict our system by any form of data, e.g.\ at any time, masked data,  average values, or most importantly by requirements of future states.
We exploit the kernelized structure of the \ac{gp} to provide open-loop stability to the controlled system. This results from the fact that a \ac{gp} posterior converges to its prior in the absence of correlated training data. Moreover, we connect \acp{gp} with \ac{mpc} via CAI. This connection may open the field of control for \ac{mpc} for the vast methods of \acp{gp} or other kernel based methods including variational approaches for non-Gaussian likelihoods \citep{titsias2009variational}.

\emph{Notation:} We denote the concatenation of state $x \in \mathbb{R}^{n_x}$ and control $u \in \mathbb{R}^{n_u}$ as $z \in \mathbb{R}^{n_z}$. We use $\dot x$ as the derivative of $x(t)$ with respect to $t$, $x\leq y$ for vectors denotes $x_i\leq y_i$ in each entry, $\mathds{1}_A$ is the indicator function for $x \in A$, $\delta(x,x')$ denotes the Kronecker function and diag$(x_1,\dots,x_n)$ is a diagonal matrix containing $x_1,\dots, x_n$ on the diagonal.


\section{Problem formulation}
\label{sec:problem}
Consider the controllable linear \ac{ode} based system with state $x \in \mathbb{R}^{n_x}$ and control input $u \in \mathbb{R}^{n_u}$
\begin{align}\label{eq:linear_system}
    \dot x &= A x + Bu 
\end{align}
where $A \in \mathbb{R}^{n_x \times n_x}$ and $B \in \mathbb{R}^{n_x \times n_u}
$ are system and control matrices respectively \citep{rawlings2017model}.
The system state and control input are subject to constraints
\begin{align}\label{eq:box_constraints_x}
    x_{\min} &\leq x(t) \leq x_{\max} \\
    u_{\min} &\leq u(t) \leq u_{\max}\label{eq:box_constraints_u}
\end{align}
for $t \in [t_0, t_T]$. 
The general tracking control task is the minimization of the difference between a given reference $x_{\text{ref}} \in \mathbb{R}^{n_x}$ and states $x$ over a given time horizon $[t_0, t_T]$ 
\begin{equation}\label{eq:integral_formulation}
    \min_{u(t)} \int_{t_0}^{t_T} ( x_{\text{ref}} - x(t) )^2 dt
\end{equation}
with the constraints \eqref{eq:linear_system}, \eqref{eq:box_constraints_x}, \eqref{eq:box_constraints_u} and given initial point $x(t_0) = x_0$.
The tracking control task at discrete timesteps is an approximation of \eqref{eq:integral_formulation} and minimizes the distance of state
$x_{\text{ref}}$ and state $x(t)$ to find the minimal error control solution of
\begin{subequations}
\begin{align} 
    \min_{u(t)} &\sum_{i=0}^{T} (x_{\text{ref}} - x(t_i))^2 + \Vert u \Vert\label{eq:mpc:obj}\\
    \text{s.t. } \dot x &= Ax + Bu, \label{eq:mpc:con:ode}\\
    x(t_0) &= x_0, \label{eq:mpc:con:init}\\
    x_{\min} &\leq x(t) \leq x_{\max} \quad \forall t \in [t_0,t_T], \label{eq:mpc:con:x}\\
    u_{\min} &\leq u(t) \leq u_{\max} \quad \forall t \in [t_0,t_T] \label{eq:mpc:con:u}.
\end{align}
\end{subequations}
We will present an approach in this paper which will approximate the solution of this optimization problem by giving a reference point $x_{\text{ref}}$ and forcing the resulting solution $x(t)$ and $u(t)$ to provide smooth behavior.
Note that the constraints \eqref{eq:mpc:con:ode} and \eqref{eq:mpc:con:init} must be considered as hard constraints, while the remaining can be incorporated as soft constraints in order to guarantee feasibility of the optimization problem.

\section{Preliminaries}
\label{sec:preliminaries}
\subsection{Gaussian Processes}
A \acf{gp} \citep{rasmussen2006gaussian} $g(t) \sim \mathcal{GP}(\mu(t), k(t, t'))$ is a stochastic process with the property that all $g(t_1), \ldots, g(t_n)$ are jointly Gaussian.
Such a \ac{gp} is fully characterized by its mean $\mu(t)$ and covariance function $k(t, t')$.
By conditioning a \ac{gp} on a noisy dataset $\mathcal{D} = \{(t_1, z_1), \ldots ,(t_n, z_n)\}$ we have the posterior \ac{gp} defined as

\begin{equation}\label{eq:gaussian_process_posterior_distribution}
    \begin{aligned}
        \mu^* &= \mu(t^*) + K_*^\top(K + \sigma_n^2 I)^{-1}(z - \mu(t)) \\
        k^* &= K_{**} - K_*^\top(K + \sigma_n^2 I)^{-1} {K_*}
    \end{aligned}
\end{equation}
with covariance matrices $K = (k(t_i, t_j))_{i,j} \in \mathbb{R}^{n \times n}$, $K_* = (k(t_i, t^*_j))_{i, j} \in \mathbb{R}^{n \times m}$ and $K_{**} = (k(t^*_i, t^*_j))_{i, j} \in \mathbb{R}^{m \times m}$ for predictive positions $t^* \in \mathbb{R}^m$ with noise variance $\sigma_n^2$.
This is the most common way of applying \acp{gp} in control theory for regression analysis on time series data. 

Additionally, \acp{gp} can be parameterized in terms of hyperparameters $\theta$, which include the noise variance $\sigma_n^2$. The noise variance does not have to be constant (homoscedastic), but can be input dependent (heteroscedastic), i.e. $\sigma_n^2(t) \in \mathbb{R}^{n_z}$. The noise variance describes the noise we expect on a given datapoint $(t_i,z_i)$.
Additional hyperparameters are commonly introduced via its covariance function, for example the \ac{se} covariance function often includes signal variance $\sigma_f^2$ and smoothness parameter $\ell^2$: 
\begin{equation}\label{eq:SE_kernel}
    k_{\text{SE}}(t, t') = \sigma_f^2\exp\left(-\frac{(t-t')^2}{2\ell^2}\right)
\end{equation}

These hyperparameters are trained by maximizing the \acp{gp} \ac{mll}:
\begin{equation} \label{eq:gp:MLL}
    \log p(z|t) = -\frac{1}{2}z^\top\left(K+\sigma_n^2I\right)^{-1}z - \frac{1}{2}\log \left( \det \left(K+\sigma_n^2I\right) \right)
\end{equation}
where $I$ is the identity and constant terms are omitted. We obtain a quadratic type error term combined with a regularization term based on the determinant of the regularized kernel matrix.


\subsection{Linear Ordinary Differential Equation GPs}\label{sec:LODE_GP}
The class of \acp{gp} is closed under linear operations\footnote{This property holds true for almost all relevant linear cases in control theory. For more details see \citep{harkonen2023gaussian, matsumoto2024images}.}, i.e.\ applying a linear operator $\mathcal{L}$ to a \ac{gp} $g$ as $\mathcal{L}g$ is again a \ac{gp}.
This ensures that realizations of the \ac{gp} $\mathcal{L}g$ lie in the image of the linear operator $\mathcal{L}$, in addition to the \ac{gp} $g$ \citep{jidling2017linearly,langehegermann2018algorithmic}.

We demonstrate the procedure from \citep{besginow2022constraining} for constructing so-called \acp{lodegp} --- \acp{gp} that strictly satisfy the underlying system of linear homogenuous ordinary differential equations --- using the following unstable system, with two integrators of the control function.
\begin{align}
	\dot x = \begin{pmatrix}
	0 & 1 \\
	1 & 1 
	\end{pmatrix} x + \begin{pmatrix}
	0 \\ 1
	\end{pmatrix}
	u
\end{align}
We subtract $\dot xI$ and combine state $x$ and input $u$ by stacking it in one variable $z$ to reformulate the system as:
\begin{align}\label{eq:harmonic_oscillator_diffeq}
	0 = H \cdot z =  \begin{pmatrix}
	    -\partial_t  & 1 & 0 \\
        1 & 1 - \partial_t  & 1 
	\end{pmatrix} \begin{pmatrix}
	x_1 \\ x_2 \\ u
	\end{pmatrix}
\end{align}
We can algorithmically factor $H$ into three matrices such that $Q\cdot H\cdot V = D$, with $D \in \mathbb{R}[\partial_t]^{n_x \times n_z}$ the Smith Normal Form and $Q \in \mathbb{R}[\partial_t]^{n_x \times n_x}, V \in \mathbb{R}[\partial_t]^{n_z \times n_z}$ invertible \citep{smith1862systems, newman1997smith}.
All matrices belong to the polynomial ring $\mathbb{R}[\partial_t]$ i.e.\ containing polynomials of $\partial_t$.
For the system in Equation~\eqref{eq:harmonic_oscillator_diffeq} the application of an algorithm to find the Smith Normal Form results in the following $D$, $Q$ and $V$:
\begin{equation}
    D = \begin{pmatrix}
        1 & 0 & 0\\
        0 & 1 & 0
    \end{pmatrix},
    Q =\begin{pmatrix}
        0 & 1\\
        -1 & -\partial_t
	\end{pmatrix},
    V =\begin{pmatrix} 
	1 & 0     & 1 \\
	0 & -1    & \partial_t \\
    0 & -\partial_t-1  & \partial_t^2 + \partial_t - 1
	\end{pmatrix}
\end{equation}
We then construct a prior latent \ac{gp} $\tilde{g}$ using the simple construction rules presented in Table 1 of \cite{besginow2022constraining}, based on the diagonal entries of $D$. These entries are $0$ or $1$ for controllable systems.
In the case of the system in Equation~\eqref{eq:harmonic_oscillator_diffeq}, we construct the latent \ac{gp} $\tilde{g}$: 
\begin{equation}
    \tilde{g} = \mathcal{GP}\left(\begin{pmatrix}0 \\ 0 \\ 0 \end{pmatrix}, 
        \begin{pmatrix}
            0 & 0 & 0\\
            0 & 0 & 0\\
            0 & 0 & k_\text{SE}
        \end{pmatrix}\right)
    \end{equation}
which we simplify to $g = \mathcal{GP}(\begin{pmatrix}
    0
\end{pmatrix}, \begin{pmatrix}
    k_\text{SE}
\end{pmatrix})$ by removing the independent, uninformative zero-entries in the covariance function of $\tilde{g}$, which deterministically produce zero behavior.
This can be understood as a trivial case of the Gaussian marginalization property, in which we remove constant behavior. 
We simplify $V$ similarly by omitting the first two columns, which correspond to the zeroes in $\tilde{g}$.
By applying the linear operator $V$ to this \emph{latent} \ac{gp} $g$, the realizations of the resulting \ac{lodegp} $Vg$ in Equation~\eqref{eq:LODE_GP_harmonic} \emph{strictly} satisfy the system in Equation~\eqref{eq:harmonic_oscillator_diffeq}, as detailed in \cite{besginow2022constraining}.
By doing so we guarantee that the \ac{lodegp} $Vg$ spans 
the nullspace of $H$, which is equivalent to saying that $Vg$ produces only solutions to the original homogenuous system $H\cdot z=\mathbf{0}$.
\begin{equation}\label{eq:LODE_GP_harmonic}
    Vg = \mathcal{GP}\left(\mathbf{0}, V\cdot 
        \begin{pmatrix}
			k_\text{SE}
		\end{pmatrix}
    \cdot \hat{V}^\top\right)
\end{equation}
where $\hat{V}$ is the operator $V$ applied to the second argument ($t'$) of the \ac{se} covariance function $k_{\text{SE}}$ (cf.\ Equation~\eqref{eq:SE_kernel}).

The \ac{lodegp} in Equation~\eqref{eq:LODE_GP_harmonic} can be trained and conditioned on datapoints, 
which we exploit for our control algorithm.
We illustrate samples drawn from the \ac{lodegp} in Equation~\eqref{eq:LODE_GP_harmonic}, after conditioning it on varying numbers of setpoints, in Figure~\ref{fig:spring_mass_samples}.

\begin{figure}[t]
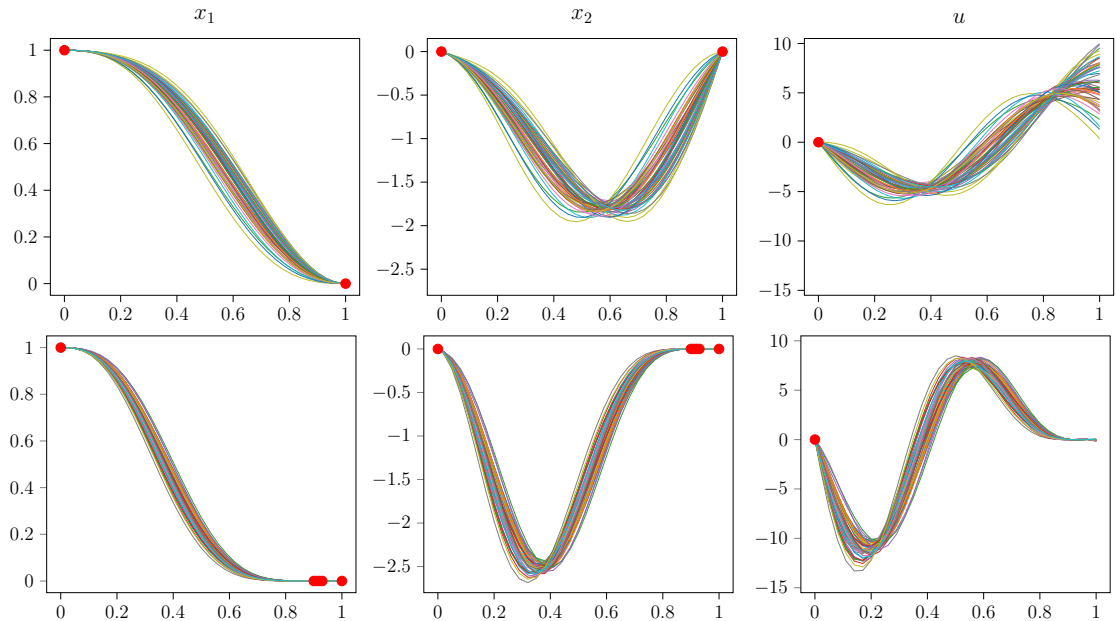

    \centering
    \input{plots/spring_mass_samples.tex}
    \input{plots/5data_spring_mass_samples.tex}
    \caption{
        We see 50 samples drawn from a trained \ac{lodegp} for the spring-mass system in Equation~\eqref{eq:harmonic_oscillator_diffeq}. 
        Each sample is a multivariate time series that spans across all three columns, depicting the distinct channels of the system, and is guaranteed to satisfy the \acp{ode}.
        Setpoints are shown in red and the x-axis and y-axis are shared across rows.
        (Upper) Only the initial state $x_0$ and the desired state $x_T$ are given.
        (Lower) In addition to the initial state and desired state, some intermediate states are given.}
    \label{fig:spring_mass_samples}
\end{figure}

\section{LODE-GP Model Predictive Control}\label{sec:method}

This section describes how to apply our \ac{lodegp} based \ac{mpc} to generate the control input using \ac{gp} conditioning.
We assume a \ac{lodegp} as described in Section~\ref{sec:LODE_GP} for a system of linear \acp{ode}. As we have seen in the previous section, the \ac{lodegp} posterior mean yields smooth functions which satisfy \eqref{eq:mpc:con:ode}. In the following we will show how the remaining constraints \eqref{eq:mpc:con:init} - \eqref{eq:mpc:con:u} can be enforced as hard and soft constraints respectively by specific constructions of the conditioned dataset $\mathcal{D}$ and obtain optimality as in \eqref{eq:mpc:obj}.

\subsection{Controller formulation} \label{sec:method:basic}

We have to respect the initial point constraint~\eqref{eq:mpc:con:init} as hard constraint in timestep $i$. We translate this to the \ac{lodegp} as using $(t_i, z_i)$ in the conditioned dataset $\mathcal{D}$ with noise variance $\sigma_n^2(t_i) = 0 \in \mathbb{R}^{n_z}$ with $n_z = n_x + n_u$. Due to numerical issues, we have to set a numerical jitter of $10^{-8}$ as the noise variance. This forces the \ac{lodegp} posterior mean to satisfy $\mu^*(t_i) = z_i$ up to numerical precision. We define this dataset as $\mathcal{D}_\text{init} = \{(t_i, z_i)\}$ in timestep $i$. 
In order to track the constant reference point $x_\text{ref}$, we choose a constant $z_\text{ref}$ such that Equation~\eqref{eq:harmonic_oscillator_diffeq} is satisfied, which is always possible for controllable systems.
This allows to set the LODE-GP prior mean $\mu(t) = z_\text{ref}$. The posterior mean $\mu^*(t)$ then yields a function which satisfies $\mu^*(t_0) = z_0$ and converges to $z_\text{ref}$ for $t \to \infty$.
As a model predictive control scheme, we update the dataset in each timestep as the new initial point.

\subsection{Open-loop stability}
The controller defined by the \ac{lodegp} and a single reference point yields convergence to the prior mean of the \ac{lodegp}. This results from a property of Bayesian models in general which converge to their prior in the absence of correlated datapoints. This is stated in the following theorem.

\begin{theorem}\label{thm:stability}
    The posterior mean $\mu^*(t)$ of a \ac{lodegp} with $\mathcal{D} = \{(t_0, z_0)\}$ and a squared exponential covariance function converges to its prior for $t \to \pm \infty$.
\end{theorem}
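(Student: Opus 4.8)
The plan is to reduce the statement to the elementary fact that the squared exponential kernel and all of its derivatives decay to zero at infinity, and then to read off the conclusion from the closed-form posterior mean. Specializing the posterior formula in Equation~\eqref{eq:gaussian_process_posterior_distribution} to the singleton dataset $\mathcal{D} = \{(t_0,z_0)\}$ and to the constant prior mean $\mu(t) \equiv z_\text{ref}$ chosen in Section~\ref{sec:method:basic}, the posterior mean becomes $\mu^*(t) = z_\text{ref} + K_*(t)^\top (K + \sigma_n^2 I)^{-1}(z_0 - z_\text{ref})$, where $K = k(t_0,t_0)$ and $\sigma_n^2 I$ are fixed matrices and $z_0 - z_\text{ref}$ is a fixed vector. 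Hence the only $t$-dependent quantity is the cross-covariance block $K_*(t) = k(t,t_0)$, and it suffices to show $K_*(t) \to 0$ as $t \to \pm\infty$.

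First I would write $K_*(t)$ explicitly for the \ac{lodegp}. By construction (Equation~\eqref{eq:LODE_GP_harmonic}) the covariance of $Vg$ is $V\,\kSE\,\hat V^\top$, so each entry of $K_*(t)$ has the form $V_i(\partial_t)\,V_j(\partial_{t'})\,\kSE(t,t')\big|_{t'=t_0}$, a fixed polynomial differential operator in $\partial_t$ and $\partial_{t'}$ applied to $\kSE$ and then evaluated at $t' = t_0$. The key step is the lemma that every such expression is of the form $p(t - t_0)\,\exp\!\big(-(t-t_0)^2/(2\ell^2)\big)$ for some univariate polynomial $p$. This follows because $\kSE(t,t')$ depends only on the difference $s = t - t'$, so each $\partial_t$ and each $\partial_{t'}$ acts as $\pm\partial_s$, and differentiating $\sigma_f^2 \exp(-s^2/2\ell^2)$ repeatedly produces (Hermite-type) polynomials in $s$ multiplied by the same Gaussian factor.

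Given the lemma, the conclusion is immediate: Gaussian decay dominates polynomial growth, so $p(s)\exp(-s^2/2\ell^2) \to 0$ as $s \to \pm\infty$, whence every entry of $K_*(t)$ tends to $0$ as $t \to \pm\infty$. Multiplying the vanishing vector $K_*(t)$ by the bounded constant factor $(K+\sigma_n^2 I)^{-1}(z_0 - z_\text{ref})$ then gives $\mu^*(t) \to z_\text{ref}$, i.e.\ convergence of the posterior mean to the prior mean.

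The main obstacle is bookkeeping rather than analysis: one must argue that the decay persists for an \emph{arbitrary} entry of the operator matrix $V$, that is, for every monomial $\partial_t^{a}\partial_{t'}^{b}$ appearing in $V_i(\partial_t)V_j(\partial_{t'})$. I would handle this uniformly by the difference-variable substitution above, which reduces all mixed derivatives to ordinary $s$-derivatives of a single Gaussian and makes the polynomial-times-Gaussian form manifest, so that no case distinction on the particular LODE system is required. A minor point worth stating is that $K + \sigma_n^2 I$ is invertible (guaranteed here by the strictly positive jitter introduced in Section~\ref{sec:method:basic}), so the constant factor is well defined and finite.
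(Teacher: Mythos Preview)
Your proposal is correct and follows essentially the same route as the paper: factor $\mu^*(t)-\mu(t)$ into the $t$-dependent cross-covariance $K_*(t)$ times a fixed vector, observe that each entry of $K_*(t)$ is a polynomial in $t-t_0$ times the Gaussian $\exp(-(t-t_0)^2/2\ell^2)$, and conclude by Gaussian decay dominating polynomial growth. Your version is in fact more careful than the paper's, since you explicitly justify the polynomial-times-Gaussian form via the difference-variable substitution and the operator structure $V\,k_{\text{SE}}\,\hat V^\top$, whereas the paper simply asserts it.
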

\begin{proof}
    W.l.o.g, $t_0=0$. We have to proof, that $\lim\limits_{t \to \pm \infty} \mu^*(t) = \mu(t)$, which means $K_*K^{-1}(z_0-\mu(t_0) \to 0$.
    $K^{-1}(z_0 - \mu(t_0))$ is independent of $t$, therefore it can be considered as constant. Since
    \begin{equation*}
        K_* = k(t, t_0) = k(t, 0) = \sigma_f^2 p(t) \cdot \exp\left(-\frac{t^2}{2\ell^2}\right)
    \end{equation*}
 for a polynomial $p$ and the exponential function dominates the polynomial for $t \to \pm \infty$, we obtain $K_* \to 0$, which proves the claim.
\end{proof}

\begin{remark}
    The proof of theorem~\ref{thm:stability} provides the convergence rate, which is dependent on the chosen kernel. For the squared exponential kernel we obtain squared exponential decay. This result can be easily extended both for more than one point in dataset $\mathcal{D}$ and for stationary kernels which tend to 0 for $t-t' \to \infty$.
\end{remark}

This theorem suggests, that it is theoretically sufficient to use $\mathcal{D} = \mathcal{D}_\text{init}$ and use the reference $z_\text{ref}$ as constant prior $\mu(t)$ 
to obtain a working controller.
Unfortunately, the thereby generated solution $\mu^*(t)$ is not optimized on controller performance and will not respect state and control constraints.

\subsection{Soft constraints}

In order to fulfill the constraints for state \eqref{eq:mpc:con:x} and control \eqref{eq:mpc:con:u}, we encode pointwise soft constraints at time $t_i$ in the dataset
\begin{align}
    \mathcal{D}_\text{con} &= \{(t_{i+1}, z_\text{con}), \dots,(t_{i+m_c}, z_\text{con})\}, \\
    \text{with } z_\text{con} &= \frac{(z_\text{max} + z_\text{min})}{2} \\
    \text{and } \sigma_n &= \frac{(z_\text{max} - z_\text{min})}{2}
\end{align}
as constraint noise variance. 
Note that $\sigma_n^2 \in \mathbb{R}_{\geq 0}^{n_z}$ allows for different noise levels on each state and control dimension. 
For a point in $\mathcal{D}_\text{con}$ we expect that $z(t_i) = \epsilon_i$ with $\epsilon_i \sim \mathcal{N}(z_\text{con}, \sigma_n^2)$. 
The incorporation of $\mathcal{D}$ imposes soft constraints in the likelihood ~\eqref{eq:gp:MLL} of the \ac{lodegp} and therefore also in the posterior mean $\mu^*(t)$. Note that in timestep $t_i$ we only have constraints on future states. These soft constraints technically do not suggest a uniform constraint satisfaction, but rather focuses the posterior to the center $z_\text{con}$ and suggests a Gaussian distribution with the constraints as $z_\text{con} \pm \sigma_n$.

\begin{algorithm}[h]
    \caption{\ac{lodegp} based \ac{mpc}}
    \KwIn{Initial state $x_{t_0}$ and control $u_{t_0}$, reference $z_\text{ref}$, constraints $\mathcal{D}_\text{con}$ with $\sigma_n^2$}
    \KwOut{Simulation/Control path $\{(x_{t}, u_{t}) | t \in [t_0, t_T]\}$}
    Set prior $\mu(t) = z_\text{ref}$ \\
    Hyperparameter optimization of $\ell^2$ and $\sigma_f^2$ using $\mathcal{D} = \mathcal{D}_\text{init} \cup \mathcal{D}_\text{con}$ \\
    \For {$t=t_0, \dots, t_T$}{
        observe current state $x_{t_i}$ \\
        generate predictive posterior on $[t_i,t_{i+1}]$: \ac{gp} $(x^*_{t_{i+1}}, u^*_{t_{i+1}})$ = $Vg(t_{i+1} | \mathcal{D})$ \\
        set control input for next time interval: $u\vert_{[t_i,t_{i+1}]} =  u^*_{t_{i+1}}$
        }  
    \KwRet{$\{x_{t}, u_{t} | t \in [t_0, t_T]\}$}
    \label{alg:GP_MPC}
\end{algorithm}

\subsection{Optimality in a Reproducing Kernel Hilbert Space} \label{sec:method:opt}
 
Conditioning the \ac{gp} on $\mathcal{D} = \mathcal{D}_\text{init} \cup \mathcal{D}_\text{con}$ yields a posterior mean function $\mu^*(t)$ which satisfies \eqref{eq:mpc:con:ode} and \eqref{eq:mpc:con:init} as hard constraints and \eqref{eq:mpc:con:x} and \eqref{eq:mpc:con:u} as soft constraints. The mean function is optimal in the norm of an abstract space related to the covariance function of the \ac{gp} which is called the \ac{rkhs} \citep{berlinet2011reproducing}. This space is the closure of all possible posterior mean functions with the chosen covariance kernel with respect to its induced norm. For further literature on the connection of \acp{gp} and \acp{rkhs}, see \citep{kanagawa2018gaussian}. The following theorem states this result via the representer theorem \citep{scholkopf2001generalized}.

\begin{theorem}\label{thm:rkhs}
    The posterior mean function as in Equation~\eqref{eq:gaussian_process_posterior_distribution} with noise variance $\sigma_n^2$ for the constraints yields an optimal control minimizing the \ac{rkhs} norm given by the kernel of the LODE-GP.
\end{theorem}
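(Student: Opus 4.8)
The plan is to realize the posterior mean as the unique minimizer of a Tikhonov-regularized data-fitting functional over the vector-valued \ac{rkhs} $\mathcal{H}_k$ associated with the \ac{lodegp} kernel $k = V\,\kSE\,\hat{V}^\top$, and then to identify that minimizer with Equation~\eqref{eq:gaussian_process_posterior_distribution} through the representer theorem. Since $k$ is operator-valued---its entries are the differential operators in $V$ applied to a scalar \ac{se} kernel---I would first confirm, using the standard theory of vector-valued reproducing kernel Hilbert spaces \citep{berlinet2011reproducing}, that $\mathcal{H}_k$ is a well-defined \ac{rkhs} of functions $g\colon \R \to \R^{n_z}$. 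By the construction of the \ac{lodegp} in Section~\ref{sec:LODE_GP}, every element of $\mathcal{H}_k$ lies in the image of $V$, hence in the nullspace of $H$, so the dynamics constraint~\eqref{eq:mpc:con:ode} is already encoded at the level of the search space and need not be imposed separately.

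Next I would set up the objective. Writing the candidate control as $f = \mu + g$ with constant prior $\mu \equiv z_\text{ref}$ and $g \in \mathcal{H}_k$, I consider
\begin{equation*}
    J(g) = \sum_{(t_i,z_i)\in\mathcal{D}} \bigl(z_i - \mu(t_i) - g(t_i)\bigr)^\top \Sigma_i^{-1} \bigl(z_i - \mu(t_i) - g(t_i)\bigr) + \|g\|_{\mathcal{H}_k}^2 ,
\end{equation*}
where $\Sigma_i = \operatorname{diag}(\sigma_n^2(t_i))$ is the pointwise noise covariance. The constraint points in $\mathcal{D}_\text{con}$ enter with finite positive $\Sigma_i$ (soft constraints), while the initial point in $\mathcal{D}_\text{init}$ enters with the numerical jitter $\Sigma_i = 10^{-8} I$, so that $J$ is strictly convex and coercive and admits a unique minimizer $g^*$. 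The generalized representer theorem \citep{scholkopf2001generalized} then guarantees the finite expansion $g^*(\cdot) = \sum_i k(\cdot, t_i)\,\alpha_i$ with coefficients $\alpha_i \in \R^{n_z}$.

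The remaining step is a finite-dimensional computation. Substituting the expansion and using the reproducing property $\langle k(\cdot,t_i)\alpha_i,\, k(\cdot,t_j)\alpha_j\rangle_{\mathcal{H}_k} = \alpha_i^\top k(t_i,t_j)\alpha_j$ turns $J$ into a convex quadratic in the stacked coefficient vector $\alpha$. Setting its gradient to zero yields the normal equations $(K + \Sigma)\alpha = z - \mu(t)$, with $K$ the block Gram matrix of the training points and $\Sigma$ the block-diagonal noise matrix (which reduces to $\sigma_n^2 I$ in the homoscedastic case), hence $\alpha = (K+\Sigma)^{-1}(z-\mu(t))$ and $\mu^*(t^*) = \mu(t^*) + K_*^\top (K+\Sigma)^{-1}(z-\mu(t))$. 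This is exactly Equation~\eqref{eq:gaussian_process_posterior_distribution}. Because the minimizer is unique, the posterior mean is the dynamics-consistent control of minimal \ac{rkhs} norm among all trajectories achieving the same data-fit, which is the assertion of the theorem; see \citep{kanagawa2018gaussian} for the underlying \ac{gp}--\ac{rkhs} correspondence.

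The main obstacle I anticipate is treating the hard initial constraint and the operator-valued kernel simultaneously. One can sidestep a delicate $\Sigma_i \to 0$ limit by keeping the jitter strictly positive, as the algorithm does, which keeps $K+\Sigma$ invertible and $J$ strongly convex; but to claim genuine interpolation $\mu^*(t_0)=z_0$ in the sense of \eqref{eq:mpc:con:init} one should additionally verify that the jittered solution converges to the exactly-constrained minimizer, i.e.\ that the constrained representer theorem produces the same expansion in the limit. A second point needing care is that the constant prior $z_\text{ref}$ generally does not itself lie in $\mathcal{H}_k$, so the norm being minimized is that of the \emph{deviation} $g = f - \mu$ from the prior mean; I would state this explicitly so that ``minimizing the \ac{rkhs} norm'' is unambiguous.
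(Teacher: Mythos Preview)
Your approach is correct and, like the paper, rests on the representer theorem of \cite{scholkopf2001generalized}; the difference is in how the noise is handled. The paper absorbs the heteroscedastic noise directly into an augmented kernel
\[
k(t,t') = V\,k_{\text{SE}}(t,t')\,\hat{V}^\top + \sigma_n^2\,\mathds{1}_{\{t>t_0\}}\,\delta(t,t')\,\mathds{1}_{\{t'>t_0\}},
\]
verifies that this sum is positive definite (the indicator leaving the initial point noise-free), and then invokes the representer theorem once for that enlarged RKHS. You instead keep the pure \ac{lodegp} kernel for the RKHS norm and treat the noise as Tikhonov weights in a separate data-fit term, which is the standard kernel-ridge-regression formulation. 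The two packagings are equivalent and yield the same normal equations; your route is more explicit about the regularized objective actually being minimized and about the role of the jitter on $\mathcal{D}_\text{init}$, while the paper's route is shorter and makes the positive-definiteness check the only verification needed. Your closing caveats (the $\Sigma_i\to 0$ limit for genuine interpolation, and that the norm is taken on the deviation $f-\mu$) are well placed and sharpen points the paper leaves implicit.
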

\begin{proof}
     In order to use the representer theorem in
     \citep{scholkopf2001generalized}, we have to provide a positive definite kernel. We use 
     \begin{equation*}
          k(t,t') = V \cdot k_{\text{SE}}(t,t') \cdot \hat{V}^\top + \sigma_n^2 \mathds{1}_{\{t > t_0\}} \cdot \delta(t,t') \cdot \mathds{1}_{\{t' > t_0\}}
     \end{equation*}
     which results in a kernel matrix $K + \text{diag}\left(\sigma_n^2(t_0), \dots,\sigma_n^2(t_{m_c})\right)$. Since the \ac{lodegp} kernel is positive definite, the sum is either which proves the claim.
\end{proof}

This result corresponds to the objective~\eqref{eq:mpc:obj} and provides the optimality of our proposed controller. This completes the translation of the optimization problem described in chapter~\ref{sec:problem} to a problem solved by inference of a \ac{lodegp} with a specific training dataset.
In comparison to \ac{mpc} this optimality is of course limited, since the RKHS is given by the kernel and cannot be chosen as a standard function space. Moreover, the RKHS norm is defined on the extended state $z$ and is not directly able to model $x$ and $u$ separately as it is done in~\eqref{eq:mpc:obj}. In comparison to MPC, the cost function used in our approach is also limited, since it is determined by the cost function formulation in the representer theorem. For further details on the formulation of the RKHS norm for the optimal control problem generated by LODE-GPs, see \cite{besginow2025linear}.

\subsection{Heuristic improvements: Artificial references}
In order to improve the control performance, we propose two heuristic additions to the dataset $\mathcal{D}$.
We propose to extend the conditioned dataset $\mathcal{D}$ with past observed data
\begin{equation}
    \mathcal{D}_\text{past} = \{(t_{i-1}, z_{i-1}, \sigma_n^2(t_{i-1})), \dots,(t_{i-m_p}, z_{i-m_p}, \sigma_n^2(t_{i-m_p}))\}
\end{equation}
which supports the controller in providing smooth behavior. For more information, see Section~\ref{sec:evaluation}.
Further, we suggest virtual reference points to encode hard point constraints for future behavior
\begin{equation}
    \mathcal{D}_\text{v} = \{(t_{j+1}, z_{j+1}, 0),\dots,(t_{m_v}, z_{m_v}, 0)\}.
\end{equation}
This forces the posterior mean to satisfy $\mu^*(t) = z$ for all $(t,z,0) \in \mathcal{D}_\text{v}$. 
Recall that all additional points impose soft constraints in the likelihood of the \ac{lodegp} \eqref{eq:gp:MLL} when conditioning on $\mathcal{D} = \mathcal{D}_\text{init} \cup \mathcal{D}_{\text{con}} \cup \mathcal{D}_\text{v}$.
Note that setpoints may contain masked channels for specific time steps, i.e.\ some $x_i$ or $u_i$ can be missing for timestep $t_i$. %

Figure~\ref{fig:spring_mass_samples} illustrates how the space of functions is constrained by just conditioning the \ac{lodegp} on the intial state $x_0$ and the desired state $x_T$ or by conditioning it on additional setpoints. 
By adding the additional points $x_r$, as in the lower row of Figure~\ref{fig:spring_mass_samples}, we reduce the space of realizations.
This demonstrates, that the dataset $\mathcal{D}_\text{v}$ has to be chosen heuristically with prior knowledge, since virtual datapoints may lead the model to constraint violations and unstable numerical behavior. 
This can be seen in this example, as the control amplitude rises when the state $x_1$ should be regulated faster.

\section{Evaluation}\label{sec:evaluation}

\begin{figure}[t]
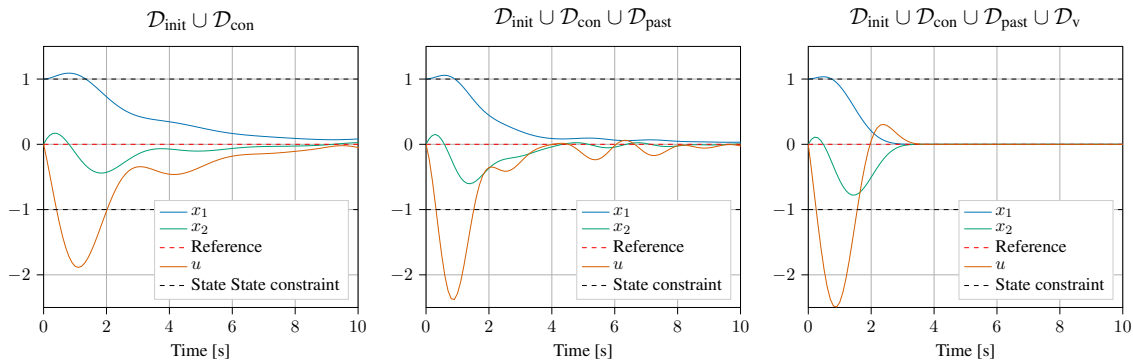

    \centering
    \scalebox{0.61}{\input{plots/model2.tex}}
    \scalebox{0.61}{\input{plots/model3.tex}}
    \scalebox{0.61}{\input{plots/model5.tex}}
    \caption{Comparison of our approach (left) with the two extended models. Each model satisfies the control constraints and the performance improves with larger conditioned dataset $\mathcal{D}$.}
    \label{fig:regulation}
\end{figure}

In this section we present the results of our approach and the proposed extensions on the unstable system introduced in Section~\ref{sec:LODE_GP}.
We investigate the mean constraint violation
\begin{equation}\label{eq:constr_viol}
    \frac{1}{T}\sum_{i=1}^T  \max\{z(t_i) - z_{\max}, 0\} + \max\{z_{\min} - z(t_i), 0\}
\end{equation}
in order to proof whether our approach can handle the imposed constraints.
Moreover, we investigate the mean control error defined as
\begin{equation} \label{eq:control_error}
    \frac{1}{T}\sum_{i=1}^T (x(t_i) - x_{\text{ref}})^2
\end{equation}
in order to compare the control performance of our approach and its extensions.
We compare three different models:

The first model is our approach introduced in \ref{sec:method:basic} -- \ref{sec:method:opt} which uses $\mathcal{D} = \mathcal{D}_\text{init} \cup \mathcal{D}_\text{con}$. The second model is the extension using past data, i.e. $\mathcal{D} = \mathcal{D}_{\text{init}} \cup \mathcal{D}_{\text{con}} \cup \mathcal{D}_\text{past}$. The third model uses additional artificial references as hard constraints. 
The hyperparameters are optimized offline in advance using $\mathcal{D}$. The hyperparameters for model 2 and 3 are thus the same, since the dataset at $t_0$ is identical. The other models have different initial training datasets and thus different hyperparameters.

We investigate a regulation control task, where the state is steered towards the origin from the initial point $x_0 = (1, 0)^\top$ and $u=0$ from $t_0=0$ to $t_T=10$. Note that for our problem formulation the initial control is of importance since the control function has to be smooth on $[t_0,t_T]$. We use $x \in [-1, 1]^2$ and $u \in [-2.5, 2.5]$ as soft constraints. 
For $\mathcal{D}_\text{con}$ we use $100$ equidistant points from $t_1=0.1$ to $t_{100}=10$ with $z_i=0$ and $\sigma_n^2=(1, 1, 2.5)^\top$. For $\mathcal{D}_\text{past}$ we use the last 20 datapoints. For $\mathcal{D}_\text{v}$ we use the same points as in $\mathcal{D}_\text{con}$ with $t > t_v = 4$. This means that for $t > t_v$ the posterior mean is forced to fulfill $\mu^*(t) = 0$. If we set $t_v$ significantly smaller, we obtain constraint violations, since the amplitude of the control would grow. If we set $t_v$ significantly larger, we increase the control error. Therefore the dataset $\mathcal{D}_\text{v}$ has to be chosen carefully.

The results of our experiments are shown in  Table~\ref{tab:regulation} and Figure~\ref{fig:regulation}. They verify, that our controller regulates the unstable system and yields smooth behavior.
We observe, that each model stabilizes the state. The more datapoints are used in $\mathcal{D}$, the faster the system stabilizes, while both extended models have similar constraint and control error. 
Although our approach and the extended model using past observed data have the same hyperparameters, the controller performance is significantly better while adhering to the constraints. Comparing the solution of the initial optimal control problem for $t=t_0$, both models propose the final solution of the model which is extended with past observations. This means, that providing no past measurements changes the behavior of our \ac{lodegp} based controller. Our explanation for this is, that providing no constraints for the past, results in the posterior mean which is most likely to come from the reference $z_\text{ref}$ at $t \to -\infty$. Since the initial point $z_i$ converges towards this point in positive time, the posterior mean changes in each iteration for both $t < t_i$ and $t > t_i$, which results in slower convergence. Constraining the past to observed values by hard therefore keeps the controller on track of the solution proposed in the first optimal control problem. 

\begin{table}[t]
    \centering
     \caption{Results for regulation task.}
    \begin{tabular}{|c|c|c|c|c|}
        \hline
         Training dataset & $\mathcal{D}_\text{init} \cup \mathcal{D}_\text{con}$ & $\mathcal{D}_\text{init} \cup \mathcal{D}_\text{con} \cup \mathcal{D}_\text{past}$ & $\mathcal{D}_\text{init} \cup \mathcal{D}_\text{con} \cup \mathcal{D}_\text{past} \cup \mathcal{D}_\text{v}$  \\
         \hline
    Constraint error \eqref{eq:constr_viol} & 0.0023 & 0.0010 & \textbf{0.0008} \\
    \hline
    Control error \eqref{eq:control_error}& 0.1460 & 0.1066 & \textbf{0.1060} \\
    \hline
    \end{tabular}
   
    \label{tab:regulation}
\end{table}
\section{Conclusion \& Limitations}
\label{sec:conclusion}
We have introduced a novel approach for \ac{mpc} with \acp{gp} acting not as the dynamics model by conditioning on only past data, but as the control policy by conditioning on the initial point and constraints. We have shown the open-loop stability of our controller both theoretically and practically and gave different extensions for the improvement of the control performance. Our controller defined by a \ac{lodegp} posterior mean yields optimal functions in the norm of the corresponding \ac{rkhs}, therefore reducing an optimal control problem to an inference problem. 

Our work shows the possibility to use GPs with its closed form posterior computation as an MPC like controller, although not competing with the vast more practical implementations of MPC. In order to further improve the practicability, for example via guaranteed constraint satisfaction, we suggest to use advanced likelihood formulations relying on variational approaches in order to compute the posterior distribution.

We showed in the last section, that our method solves continuous tracking control problems. The implementation of a discrete LODE-GP is straightforward and the construction of a corresponding Linear \ac{mpc} is analogous and left for future work.
Further research will contain different control strategies given the LODE-GP's distribution of solutions. This includes sampling for safe control paths as done in \citep{tebbe2024efficiently}.

\section*{Acknowledgements}
Andreas Besginow is supported by the research training group ``Dataninja'' (Trustworthy AI for Seamless Problem Solving: Next Generation Intelligence Joins Robust Data Analysis) funded by the German federal state of North Rhine-Westphalia.
Jörn Tebbe and Andreas Besginow are supported by the SAIL project which is funded by the Ministry of Culture and Science of the State of North Rhine-Westphalia under the grant no NW21-059C.
\bibliography{sample}
\end{document}